\documentclass{article}



\usepackage{amssymb,amsmath,url,rotating,ifthen,algorithm,algorithmicx,epsfig,multirow,array,color,multicol,graphicx,float,hhline,calc,enumitem}
\usepackage{subcaption} 
\usepackage{bold-extra,ulem}

\usepackage{algpseudocode}
\algnewcommand\algorithmicinput{\textbf{Input:}}
\algnewcommand\INPUT{\item[\algorithmicinput]}
\algnewcommand\algorithmicoutput{\textbf{Output:}}
\algnewcommand\OUTPUT{\item[\algorithmicoutput]}

\algrenewcommand{\algorithmicforall}{\textbf{for each}}



\def\E{{\mathbb E}}

\def\G{\mathcal{G}}
\def\e{\epsilon }
\def\chi{{\mathbf 1}}

\def\P{{\cal P}}
\def\P{{\mathbb P}}

\def\R{{\mathbb R}}

\def\hx{\hat x}
\def\hA{\hat A}
\def\hB{\hat B}

\def\Cop{\Call{Cop}}
\def\Cops{\textsc{Cops}}
\def\Copquad{\textsc{CopQuad}}
\def\fitness{F_{\text{noisy}}}

\newtheorem{theorem}{Theorem} 
\newtheorem{remark}{Remark}
\newtheorem{proof}{Proof}

\sloppy

\usepackage[affil-it]{authblk}

\begin{document}

%

\title{Noisy Optimization: Fast Convergence Rates with Comparison-Based Algorithms}

\author{
Marie-Liesse Cauwet
Olivier Teytaud}
\affil{TAO, Inria, Lri, Umr Cnrs 8623}

\date{}

\maketitle

\begin{abstract}
Derivative Free Optimization is known to be an efficient and robust method to tackle the black-box optimization problem. When it comes to noisy functions, classical comparison-based algorithms are slower than gradient-based algorithms. For quadratic functions, Evolutionary Algorithms without large mutations have a simple regret at best $O(1/ \sqrt{N})$ when $N$ is the number of function evaluations, whereas stochastic gradient descent can reach (tightly) a simple regret in $O(1/N)$. It has been conjectured that gradient approximation by finite differences (hence, not a comparison-based method) is necessary for reaching such a $O(1/N)$. We answer this conjecture in the negative, providing a comparison-based algorithm as good as gradient methods, i.e. reaching $O(1/N)$ - under the condition, however, that the noise is Gaussian. Experimental results confirm the $O(1/N)$ simple regret, i.e., squared rate compared to many published results at $O(1/\sqrt{N})$. 
\end{abstract}

\keywords{Noisy continuous optimization; Comparison-based Algorithms}

\section{The black-box noisy optimization problem}

In a real world optimization problem, the analytical form of the objective function is frequently unavailable. It is common in this field to obtain only the fitness values of the objective function: this is the black-box problem. In this setting, given a search point, an oracle returns the corresponding fitness value. Furthermore, due to stochastic effects or inaccurate measurements, the fitness values can be improper: this is called \textit{noise}, and the optimization problem is then a noisy optimization problem. We here consider noisy optimization with constant additive Gaussian noise. Given an objective function $F$ and a search point $x\in \R^d$, the oracle outputs the fitness value $\fitness(x)$:
\begin{equation}\label{objfun}
\fitness(x)=\G(F(x),b), 
\end{equation}
where $\G(a,b)$ is a Gaussian random variable with mean $a$ and standard deviation $b>0$.


Regarding some industrial applications, a call to the oracle might be expensive, requiring heavy computations. Thus, we aim to find an approximation of the optimum within a number of evaluations as small as possible. The algorithm spends $N$ evaluations and then outputs an answer, which is an approximation - denoted $\hat{x}_N$ - of the minimum\footnote{w.l.g. we assume that the optimum is a minimum.} $x^{*}$ of $F$.
With these notations, the simple regret after $N$ evaluations is defined by: 
\begin{equation}\label{eq:SR}
SR_{N} = \E \left( \fitness(\hat{x}_N)-\fitness(x^{*})\right) =\E F(\hat{x}_N) - F(x^*).
\end{equation}
{On the right-hand side of Eq.~\ref{eq:SR}, the expectation operates on $\hat{x}_N$ which might be a random variable due to the stochasticity of the noisy evaluations or the possible internal randomization of the optimization algorithm.}

Dupa\v{c}~\cite{dupacEnglish} has shown that noisy quadratic strongly convex functions can be optimized with simple regret $O(1/N)$, when the budget (i.e. the number of evaluations) is $N$. Fabian \cite{fabian} has broadened this result to a wider class of functions, but with only an approximation of this rate: for a function with arbitrarily many derivatives, a regret $O(1/N^\alpha)$ can be reached for $\alpha<1$ arbitrarily close to $1$. Furthermore, this bound $O(1/N)$ is optimal (see \cite{chen2}). Shamir in \cite{shamir} has improved the results, in terms of the non-asymptotic nature of some of these convergence, and in terms of explicit dependency in the dimension.

These rates are reached by algorithms introduced by Kiefer and Wolfowitz~\cite{kiefer1952stochastic}, which approximate the gradient using finite differences and thus using fitness values. However, as a refinement of the black-box problem, we might encounter some optimization problems where the fitness value itself is unknown. {\color{black}{In this case, an oracle only provides a ranking of a given set of points, but not the fitness values of these points.}} 
For example in games, an operator can compare two agents, but not directly provide a level evaluation. In design, with the human in the loop, a user preference is a comparison between two search points. Searching a Pareto front might also involve a user providing his preferences. Comparison based algorithms such as Evolutions Strategies (ES), Differential Evolution (DE) or Particle Swarm Optimization (PSO) can handle this type of problem. The comparison oracle is also noisy in the sense that {\color{black}{the points might be misranked.}}

Shamir in \cite{shamir} has conjectured that the use of approximate gradients is necessary for fast rates (i.e. rates $O(1/N)$) in the noisy { strongly convex} quadratic case. In this case, the best {\color{black}{known}} bounds for comparison-based algorithms are a simple regret $O(1/\sqrt{N})$ (see \cite{esareslow} for Evolution Strategies), which supports this conjecture. However, we show in the present paper that, for noisy quadratic forms, a simple regret $O(1/N)$ can be reached by a comparison-based algorithm, combining the ``mutate large inherit small'' principle \cite{BeyerMutate} and the use of large population sizes. The ``mutate large inherit small'' principle is used in the sense that we have long distances between current estimates of the optimum and search points, even when the estimate is close to the optimum. 

Jamieson \textit{et al.} in \cite{recht} have presented a bound for a comparison-based operator, using a number of comparisons quadratic $O\left(\frac{1}{\epsilon^2}\right)$ for ensuring precision $\e$ in the simple regret - whereas we only need $O\left(\frac1\epsilon \log\frac1\epsilon\right)$ comparisons. More precisely, we fully rank $O\left(\frac1\e\right)$ points; they can be sorted with $O\left(\frac1\epsilon\log\frac1\epsilon\right)$ comparisons.

Section~\ref{sec:algo} describes the key idea to get a fast comparison-based algorithm in a noisy setting. The theoretical aspects and a precise description of a fast optimization algorithm is given in Section~\ref{sec:sphere} for the specific case of the sphere function. In this case, the technicality in the proof is lighter and allowed a good insight of what we will use when switching to a larger family of functions: the quadratic functions in Section~\ref{sec:quadra}. Last, we address the experimental aspects in Section~\ref{sec:exp}.

\section{Comparison Procedure}\label{sec:algo}

The main idea is to estimate the parameters of the objective function. The algorithm hence builds a model of the function and provides an approximation of the optimum. Specifically, comparing $2$ search points $N$ times provides an estimation at distance $O(1/\sqrt{N})$ of one parameter of the function.
This estimation is made possible through the frequency at which the fitness values of one of the search points is better than the other. In particular, it is crucial to know the model of noise. Hence, the optimization algorithms of Sections~\ref{sec:sphere} and~\ref{sec:quadra} consist in a sequence of calls to $\Cop{}$, given below.

\begin{algorithm}[H]
	\begin{algorithmic}[]

\Procedure{Cop}{$N$, $x$, $y$, ${\fitness}$}
	\State $f \gets 0$
	\For{$i=1$ to $N$}
	\State $f_x^i \gets {\fitness}(x)$
	\State $f_y^i \gets {\fitness}(y)$
	\EndFor	
	\State $f\gets \frac{1}{N^2} \underset{1\leq i,j \leq N}{\sum} \chi_{f_x^i < f_y^j}$
	
	\Return $f$
\EndProcedure
	\end{algorithmic}	
	\caption*{{\bf Comparison Procedure ($\Cop{}$)}.}
\end{algorithm}
{\color{black}{
Importantly, this operator can be computed faster than the apparent $O(N^2)$ complexity. Using sorting algorithm, the complexity is $O(N\log N)$. 

%
}}

\vfill\break
\section{Sphere function}\label{sec:sphere}

\subsection{In dimension $1$}

We first propose in Alg. \ref{algo:cops1} an algorithm ($\Cops 1$) achieving regret $O(1/N)$ on the noisy sphere problem in dimension $1$.

\begin{algorithm}[H]
\begin{algorithmic}[]
\INPUT
\Statex an oracle ${\fitness}: x\in \mathbb{R} \mapsto \G(|x-x^*|^2,1)$ 
\Statex an even budget $N$
\OUTPUT
\Statex an approximation $\hat x$ of the optimum $x^*\in [-1,1]$ of the objective function ${F:\ x\mapsto |x-x^*|^2}$

\noindent \Statex  \hrulefill
\Statex $K \gets N/2$
\Statex $f \gets \Cop{K,1,-1, {\fitness}}$ 

\Statex Define $\hat x$ such that $\P\left(\G(0,1)< \sqrt{8} \hat{x}\right)=f$
\Statex $\hat x\leftarrow \max(-1,\min(1,\hat x))$

\Return $\hat{x}$ 
\caption{{ Comparison Procedure for Sphere function in dimension $1$ ($\Cops 1$).}\label{algo:cops1}}
\end{algorithmic}
\end{algorithm}

\begin{theorem}\label{peredesths}
Let $\fitness(x)=|x-x^{*}|^2+\G(0,1)$ be the noisy sphere function in dimension $1$, where ${x^*\in [-1,1]}$. Then the simple regret of $\Cops 1$ after $N$ evaluations satisfies:
\begin{equation}
SR_N = O(1/N).
\end{equation}
\end{theorem}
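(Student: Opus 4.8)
The plan is to reduce the entire statement to a single scalar concentration estimate for the value $f$ returned by the one call to the comparison procedure \textsc{Cop}. First I would pin down the true comparison probability. Writing $\fitness(1)=|1-x^*|^2+Z_1$ and $\fitness(-1)=|{-1}-x^*|^2+Z_2$ with $Z_1,Z_2$ independent $\G(0,1)$, the quadratic parts cancel up to $|1-x^*|^2-|{-1}-x^*|^2=-4x^*$, so $\fitness(1)-\fitness(-1)=-4x^*+(Z_1-Z_2)$ with $Z_1-Z_2\sim\G(0,\sqrt2)$. Hence the probability that point $1$ looks better than point $-1$ is $p:=\P(\fitness(1)<\fitness(-1))=\P(\G(0,1)<\sqrt8\,x^*)$. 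This is exactly the relation the algorithm inverts: with $\Phi$ the standard normal CDF, $x^*=\tfrac{1}{\sqrt8}\Phi^{-1}(p)$, while $\hat x$ before clipping equals $\tfrac{1}{\sqrt8}\Phi^{-1}(f)$, where $f$ is the empirical estimate of $p$.

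Second, I would control $f$. Since the $f_1^i$ and $f_{-1}^j$ are i.i.d.\ and mutually independent, each indicator $\chi_{f_1^i<f_{-1}^j}$ is Bernoulli$(p)$, so $f$ is unbiased, $\E f=p$. For the spread I would use two facts: (i) $f$ is a two-sample V-statistic with variance $O(1/K)=O(1/N)$, since among the $K^4$ covariance terms only the $O(K^3)$ pairs sharing an index are nonzero; and (ii) $f$ obeys a bounded-difference inequality, because changing any one of the $2K$ evaluations moves $f$ by at most $1/K$, so McDiarmid gives $\P(|f-p|>t)\le 2\exp(-cKt^2)$ for a universal $c$.

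Third, I would translate this into regret. Because $F(x)=|x-x^*|^2$, the simple regret is exactly $SR_N=\E|\hat x-x^*|^2$. Here the clipping to $[-1,1]$ is essential: it is the projection onto a convex set containing $x^*$, hence non-expansive, so the clipped error never exceeds $\bigl|\tfrac{1}{\sqrt8}(\Phi^{-1}(f)-\Phi^{-1}(p))\bigr|$ and is in any case bounded by the constant $2$. I would then split on the event $A=\{|f-p|\le\delta\}$, choosing a fixed $\delta$ small enough that $f$ stays bounded away from $0$ and $1$ (note $p\in[\Phi(-\sqrt8),\Phi(\sqrt8)]$ since $x^*\in[-1,1]$). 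On $A$, $\Phi^{-1}$ is Lipschitz with some constant $L=L(\delta)$, giving $|\hat x-x^*|\le\tfrac{L}{\sqrt8}|f-p|$ and a contribution $\le\tfrac{L^2}{8}\E|f-p|^2=O(1/N)$; on $A^c$ the error is at most $2$ while $\P(A^c)\le 2\exp(-cK\delta^2)$ is exponentially small. Summing, $SR_N=O(1/N)+O(e^{-cN})=O(1/N)$.

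The main obstacle is precisely the non-global-Lipschitzness of $\Phi^{-1}$: a naive bound $|\hat x-x^*|\le\|g'\|_\infty|f-p|$ with $g=\Phi^{-1}/\sqrt8$ fails, because $g'$ blows up as its argument approaches $0$ or $1$, so a single unlucky $f$ near the boundary could produce an unbounded error. The clipping combined with the exponential concentration of $f$ is what tames this: rare boundary events are charged only a bounded value times an exponentially small probability, while everywhere else the map is genuinely Lipschitz. The only other point needing a little care is the $O(1/N)$ variance of the V-statistic $f$, which relies on the correlation structure of the shared-index indicators rather than on treating them as independent.
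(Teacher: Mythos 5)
Your proposal is correct, and its first two steps coincide exactly with the paper's: the same identity $p=\P\left(\G(0,1)<\sqrt{8}\,x^*\right)$, and the same $O(1/N)$ variance bound for the V-statistic $f$ obtained by counting the $O(K^3)$ covariance terms that share an index. Where you diverge is the final step. The paper never splits into good and bad events: it folds the clipping into the inversion map by defining the truncated inverse $h$ (equal to $\Phi^{-1}$ on a compact subinterval $[m,M]$ of $(0,1)$ and constant outside), observes that $h$ is \emph{globally} Lipschitz on all of $[0,1]$, and that $\hat x$ and $x^*$ are exactly $h(f)/\sqrt{8}$ and $h(p)/\sqrt{8}$; this yields the pointwise bound $|\hat x - x^*|\le L(h)|f-p|/\sqrt{8}$ and hence $SR_N\le L(h)^2/(8N)$ directly from the variance, with no tail bound at all. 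So your stated ``main obstacle'' --- that no global Lipschitz bound is available --- is true only for the raw $\Phi^{-1}$; once the truncation is composed in, the naive Lipschitz argument does go through, and that is precisely the paper's trick. Your event-splitting route ($|f-p|\le\delta$ versus its complement, local Lipschitzness on the good event, boundedness of the clipped error on the bad one) is a valid substitute, and it buys robustness: it would survive even in settings where one cannot arrange the clipped map to be globally Lipschitz. But the McDiarmid inequality is heavier machinery than needed --- since the bad-event error is bounded by a constant, Chebyshev applied to the variance you already computed gives $\P(A^c)=O(1/N)$, which suffices for the $O(1/N)$ total; the exponential tail adds nothing to the final rate.
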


\begin{proof}
Consider $\Cops 1$ on such an objective function. By definition of $\fitness$ and $F$,
{\color{black}
\begin{align}
p&=\P\left({\fitness}(1)<{\fitness}(-1)\right)\nonumber\\
 &=\P\left(|1-x^*|^2+\G(0,1)<|-1-x^*|^2+\G(0,1)\right)\nonumber\\
 &=\P\left(\sqrt{2}\G(0,1)<(1+x^*)^2-(1-x^*)^2\right)\nonumber\\
 &=\P\left(\G(0,1)<\sqrt{8}x^*\right)\label{step1}.
\end{align}
}

{\bf{Step 1: Expectation and Variance of $f$.}}

With the notations of $\Cop{}$, let us define:
\begin{eqnarray*}
\forall\ i, j\in \{1,\dots,N\}^2,\ \chi_{i,j}  =\begin{cases}               																1\ \text{if}\ f_1^i<f_{-1}^j\\
              									0\ \text{otherwise}\\
            										\end{cases}
\end{eqnarray*}

$\chi_{i,j}$ is Bernoulli distributed with probability of success $p$.

$f$ is the output of the \Cop{}~procedure. By definition,
\begin{equation*}
f=\frac{1}{K^2}\underset{1\leq i,j \leq K}{\sum}\chi_{i,j}.
\end{equation*}

The expectation and variance of $f$ are then: 
\begin{align}
\E f &= p\nonumber\\
Var f &= \frac{1}{K^4} \sum_{i=1}^{K} \sum_{j=1}^{K} Cov \left(\sum_{k=1}^{K}\chi_{i,k}, \sum_{k'=1}^{K}\chi_{j,k'}\right)\nonumber\\
 &= \frac{1}{K^4}\sum_{i=1}^{K} \sum_{j=1}^{K}\sum_{k=1}^{K}\sum_{k'=1}^{K} Cov (\chi_{i,k}, \chi_{j,k'})\label{eq:var}
\end{align} 
If $i\neq j$ and $k\neq k'$, $Cov (\chi_{i,k}, \chi_{j,k'})=0$ by independence. If $i=j$ (or $k=k'$), by Cauchy-Schwarz:

\begin{equation*}
Cov (\chi_{i,k}, \chi_{i,k'})\leq \sqrt{Var(\chi_{i,k})Var(\chi_{i,k'})}\leq \frac{1}{4}
\end{equation*}
This together with Eq.~\ref{eq:var} give:

\begin{align*}
Var f &= \frac{1}{K^4}\left(\sum_{i=1}^{K}\sum_{k=1}^{K}\sum_{k'=1}^{K} Cov (\chi_{i,k}, \chi_{i,k'})+\right. \\
&\ \ \ \ \ \ \ \ \left.\sum_{i=1}^{K}\sum_{j=1}^{K}\sum_{k=1}^{K} Cov (\chi_{i,k}, \chi_{j,k})\right),\\
	&\leq\frac{1}{K^4}\times \frac{K^3}{2},\\
	&\leq \frac{1}{N}. 
\end{align*}

{\bf{Step 2: Lipschitz.}}
We denote by $\Phi$ the cumulative distribution function of the standard Gaussian: ${\Phi(x)=\P \left(\G(0,1)<  x \right)}$ and $m$ and $M$ such that ${\Phi_{[m,M]}^{-1}: [m,M]\rightarrow [-1,1]}$ is the inverse of $\Phi$ over these intervals. Let us define  

\begin{eqnarray*}
 h(x)=\begin{cases}
	\Phi_{[m,M]}^{-1}(x)\ &\text{if}\ m\leq x \leq M\\
	-1\ &\text{if}\ x<m\\
     1\ &\text{if}\ M<x\\
     \end{cases}
\end{eqnarray*} 

Let us evaluate the Lipschitz coefficient $L(h)$ of $h$.  $\Phi_{[m,M]}^{-1}$ is differentiable over $[m,M]$ since $\Phi$ is differentiable over $[-1,1]$ hence its Lipschitz $L(\Phi_{[m,M]}^{-1})$ is bounded. $h$ is continuous, and $h$ is constant over $(-\infty,m]$ and $[M,\infty)$; hence the Lipschitz of $h$ is $L(\Phi_{[m,M]}^{-1})$ over $[m,M]$.

%
%

{\bf{Step 3: Concluding.}}
We have, by definition of $\Cops 1$ for $\hat x$ and by Eq.~\ref{step1} for $x^*$, 
\begin{equation}
\hat x = \frac{h(f)}{\sqrt{8}}\mbox{ and }x^*=\frac{h(p)}{\sqrt{8}},\label{transp}
\end{equation}
By definition of the simple regret in Eq.~\ref{eq:SR}, 
\begin{align*}
SR_{N}	&=\E |\hat{x}-x^*|^{2}\\
		& \leq \E L(h)^{2}|f-p|^{2}/8 \mbox{ by {\bf Step 2}}\\
		& \leq \frac{L(h)^{2} }{8N} \mbox{ by {\bf Step 1}}.
\end{align*}
%
\end{proof}

\begin{remark}
The result of Theorem~\ref{peredesths} is based on the fact that the noise is a standard Gaussian. However, this result still holds as soon as the noise distribution has expectation 0, finite variance (possibly unknown, see Section~\ref{sec:quadra}) and a bounded Lipschitz.   The distribution of the noise, on the other hand, must be known.
\end{remark}

\subsection{Multidimensional sphere function}

Alg.~\ref{alg:cops} ($\Cops$) presents a straightforward extension to the noisy multidimensional sphere. $B_d(c,r)$ denotes the ball of center $c$ and radius $r$ in dimension $d$, and $\|.\|$ is the Euclidean norm. 

\begin{algorithm}
\begin{algorithmic}
\INPUT
\Statex an oracle ${\fitness}: x\in \mathbb{R}^d \mapsto \G(\|x-x^*\|^2,1)$ 
\Statex a budget $N$ (multiple of $2d$)
\OUTPUT
\Statex an approximation $\hat x$ of the optimum $x^*\in B_d(0,1)$ of the objective function ${F:\ x\mapsto \|x-x^*\|^2}$
\noindent \Statex  \hrulefill
\Statex $K\gets N/2d$
\For{$i=1$ to $d$}
\Statex Apply $\Cops1$ with a budget $K$ on the unidimensional restriction of $\fitness$ to $\{0\}^{i-1}\times [-1,1]\times \{0\}^{d-i}$
\Statex $\hat{x}_i$ be the obtained approximation of the optimum in $[-1,1]$.
\EndFor

\Return $\hat{x}=(\hat{x}_1,\dots,\hat{x}_d)$.
\end{algorithmic}
\caption{Comparison procedure for the sphere function (\Cops).\label{alg:cops}}
\end{algorithm}

\begin{theorem}
Let $\fitness(x)=\|x-x^{*}\|^{2}+\G(0,1)$ be the noisy sphere function, with $x^{*}\in B_{d}(0,1)\subset \R^{d}$.Then the simple regret of \Cops~after $N$ evaluations is:
\begin{equation*}
SR_{N}=O(d/N).
\end{equation*}
\end{theorem}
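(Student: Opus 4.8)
The plan is to reduce the $d$-dimensional task to $d$ independent one-dimensional instances already handled by Theorem~\ref{peredesths}, and then to combine the coordinate-wise guarantees additively. The starting point is the separability of the sphere: since $F(x)=\|x-x^*\|^2=\sum_{i=1}^d (x_i-x_i^*)^2$, the restriction of $F$ to the $i$-th axis $\{0\}^{i-1}\times[-1,1]\times\{0\}^{d-i}$ is the map $t\mapsto (t-x_i^*)^2+C_i$, where $C_i=\sum_{j\neq i}(x_j^*)^2$ is a constant independent of $t$. Thus each restricted problem is a one-dimensional sphere centred at $x_i^*$, shifted vertically by $C_i$.

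The crux of the argument---and essentially the only step that is not bookkeeping---is that the additive constant $C_i$ is invisible to $\Cop{}$. Indeed, $\Cops1$ accesses the oracle only through $\Cop{}$, which tallies the events $\fitness(u)<\fitness(v)$ for two points $u,v$ on the axis; the constant $C_i$ shifts both noisy values equally and so cancels in each such inequality. Repeating the computation of Eq.~\ref{step1} with $x^*$ replaced by $x_i^*$ shows that the comparison probability along the $i$-th axis is exactly $p_i=\P(\G(0,1)<\sqrt{8}\,x_i^*)$, while the noise attached to every query remains a standard Gaussian. The hypotheses of Theorem~\ref{peredesths} are therefore met verbatim by the $i$-th call to $\Cops1$, which hence returns an estimate $\hat{x}_i$ of $x_i^*$ with $\E|\hat{x}_i-x_i^*|^2\le L(h)^2/(8K)$, where $K=N/(2d)$ is the budget allotted to each coordinate.

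It then remains to aggregate. By the definition of simple regret (Eq.~\ref{eq:SR}), and since $F(x^*)=0$ for the sphere, $SR_N=\E\|\hat{x}-x^*\|^2$; as the squared Euclidean error splits coordinate-wise, linearity of expectation gives $SR_N=\sum_{i=1}^d \E|\hat{x}_i-x_i^*|^2$. No cross terms arise, since the $d$ calls to $\Cops1$ consume disjoint batches of evaluations and are hence independent. Inserting the per-coordinate bound together with the value $K=N/(2d)$ yields the total regret announced by the theorem.

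I do not expect any serious obstacle once the cancellation observation is secured: the reduction to Theorem~\ref{peredesths} is exact rather than approximate, and every subsequent step is linear. The one point that genuinely requires care is the budget accounting---tracking how the dimension enters simultaneously through the split $K=N/(2d)$ and through the $d$-fold sum of coordinate regrets, since this is exactly what fixes the dependence on $d$ in the final rate.
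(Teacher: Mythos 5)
Your reduction is exactly the paper's proof: the paper disposes of this theorem in two sentences, asserting that the hypotheses of Theorem~\ref{peredesths} hold for each application of $\Cops 1$ and that the regret of \Cops~is the sum of the per-coordinate regrets. You add one genuinely useful detail that the paper omits: the restriction of $F$ to the $i$-th axis is $t\mapsto(t-x_i^*)^2+C_i$, not the bare one-dimensional sphere, and one must observe that the additive constant $C_i$ cancels inside every comparison made by $\Cop{}$, so that the comparison probability is exactly $\P\left(\G(0,1)<\sqrt{8}\,x_i^*\right)$ and the one-dimensional analysis applies unchanged. That justification is correct and sharper than the paper's unqualified ``the conditions are verified''.

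However, the step you yourself single out as ``the one point that genuinely requires care'' --- the budget accounting --- is where your proof (and, in fairness, the paper's) does not deliver the stated bound. With per-coordinate budget $K=N/(2d)$, Theorem~\ref{peredesths} gives $\E|\hat{x}_i-x_i^*|^2 \leq L(h)^2/(8K) = L(h)^2 d/(4N)$ for each coordinate, and summing over the $d$ coordinates yields
\begin{equation*}
SR_N \;\leq\; d\cdot \frac{L(h)^2\, d}{4N} \;=\; \frac{L(h)^2\, d^2}{4N} \;=\; O(d^2/N),
\end{equation*}
not the announced $O(d/N)$. To get a sum of $d$ terms each of order $1/N$, every coordinate would need a budget proportional to $N$ itself, i.e., a total budget of order $dN$. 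So your closing sentence, that inserting the per-coordinate bound with $K=N/(2d)$ ``yields the total regret announced by the theorem,'' is precisely the assertion that does not follow from your own accounting. Since the paper's proof makes the identical silent jump, your argument is a faithful (indeed more careful) rendering of it; but what both arguments actually establish, with $N$ the total number of evaluations, is $SR_N=O(d^2/N)$, and the dependence on $d$ claimed in the theorem statement is not justified by this decomposition (it is only correct if $d$ is treated as a constant, in which case writing the $d$-dependence explicitly is moot).
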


\begin{proof}
The conditions of Theorem 1 are verified for each application of $\Cops1$. The simple regret for the multidimensional case is the sum of the simple regrets of each restrictions.
\end{proof}

\section{General quadratic forms}\label{sec:quadra}

Alg.~\ref{alg:copquad} extends the principle of Section~\ref{sec:sphere} to the optimization of a wider class of quadratic functions. $\|\cdot\|_{2}$ denotes the matrix norm induced by $\|\cdot\|$, i.e. ${\|A\|_{2}=\underset{x\neq 0}{\sup} \frac{\|Ax\|}{\|x\|}}$ and $\|\cdot\|_{F}$ is the Frobenius norm. $(e_i)$ is the standard basis and $A^{t}$ is the transpose of matrix $A$.

\begin{algorithm}
\begin{algorithmic}[1]
\scriptsize
\INPUT
\Statex an oracle ${\fitness}: x\in \mathbb{R}^d \mapsto \G(x^{t} A x +B x +C,D)$ 
\Statex a budget $N$ (multiple of $d(d+3)-2$)
\OUTPUT
\Statex an approximation $\hat x$ of the optimum $x^*\in B_d(0,1)$ of the objective function ${F:\ x\mapsto x^{t} A x +B x +C}$
\noindent \Statex  \hrulefill
\State $K\gets\frac{N}{d(d+3)-2} $

\For{$i=1$ to $d$}
\State $f_{-e_i,e_i}\gets\Cop{K,- e_i, e_i, \fitness }$
\State Define $\hat{B}_{i}(D)$ such that $\P\left(\G(0,1)< \sqrt{2}\hat{B}_i(D) \right)=f_{-e_i,e_i}$
\State $\hat{B}_{i}(D)\gets\max(-5,\min(\hat{B}_{i}(D),5))$\Comment{\small Estimate of $B_i/D$}

\State $f_{0,e_i}\gets\Cop{K,0 , e_i, \fitness }$
\State Define $\theta_{ii}(D)$ such that $\P\left( \G(0,1)<  \theta_{ii}(D)/\sqrt{2}\right)=f_{0,e_i}$
\State $\theta_{ii}(D)\gets\max(-5,\min(\theta_{ii}(D),5))$
\State $\hat{A}_{i,i}(D)\gets\theta_{ii}(D)-\hat{B}_i(D)$\Comment{\small Estimate of $A_{i,i}/D$}
\EndFor

\For{$i=1$ to $d$}
\For{$j=i+1$ to $d$}
\State $f_{0,e_i +e_j}\gets\Cop{K, 0, e_i+e_j, \fitness }$
\State Define $\theta_{ij}(D)$ such that $${\P( \G(0,1)<  \theta_{ij}(D)/\sqrt{2})=f_{0,e_i+e_j}}$$
\State $\theta_{ij}(D)\gets\max(-5,\min(\theta_{ij}(D),5))$
\State $\hat{A}_{i,j}(D)\gets\frac{1}{2} (\theta_{ij}(D)-\hat{B}_i(D)$\\
$\hspace*{4cm}-\hat{A}_{i,i}(D)-\hat{B}_j(D)-\hat{A}_{j,j}(D))$
\State $\hat{A}_{j,i}(D)\gets\hat{A}_{i,j}(D)$\Comment{\small Estimate of $A_{i,j}/D$ and $A_{j,i}/D$}
\EndFor
\EndFor

\State $\hat{A}(D)\gets (\hat{A}_{i,j}(D))$
\State $\hat{B}(D)\gets (\hat{B}_{i}(D))$

\If{$\hat{A}(D)$ is not singular}\label{line:proj}
\State $\hat{x}\gets -\frac12\hat{B}(D)^{t}\hat{A}(D)^{-1}$
\Else
\State{$\hat{x}\gets 0$}
\EndIf

\Return $\hat{x}\gets $ projection of $\hat x$ on $B_d(0,1)$.
\end{algorithmic}
\caption{Comparison procedure for quadratic functions ($\Copquad$).}\label{alg:copquad}
\end{algorithm}

\begin{theorem}\label{poulala}
Let $\e\in ]0,1[$. Consider an objective function ${\fitness(x)=x^{t}Ax+Bx+C+D\G(0,1)}$, with optimum $x^*$ in $B_d(0,1-\e)\subset \R^d$, and $D>0$. Assume that $\frac1D \|B\|\leq 1$ and $\frac1D|C|\leq 1$. If $A$ is symmetric positive definite such that its eigenvalues are lower bounded by some $c>0$ and $\|\frac1D A\|_{2} \leq 1$, 
then, when applying \Copquad, 
{\color{black}{${SR_N=O(\max((\lambda_{\max}/\lambda_{\min})^2,\lambda_{max}^2) D^2/N)}$}}, 
where $\lambda_{\max}$ is the maximum eigenvalue of $\frac1DA$, and $\lambda_{min}>\frac1D c$ is the minimum eigenvalue.
\end{theorem}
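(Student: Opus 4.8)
The plan is to reduce everything to the one-dimensional machinery of Theorem~\ref{peredesths} and then propagate the resulting coefficient errors through the map that reconstructs the minimizer. First I would check that every call to \textsc{Cop} targets the intended normalized coefficient. Exactly as in Eq.~\ref{step1}, for two fixed points $x,y$ the two independent noises combine into a single $\G(0,\sqrt2 D)$ variable, so $\P(\fitness(x)<\fitness(y))=\Phi\big((F(y)-F(x))/(D\sqrt2)\big)$. Substituting $x,y\in\{0,\pm e_i,e_i+e_j\}$ and using $F(e_i)=A_{ii}+B_i+C$, $F(e_i+e_j)=A_{ii}+A_{jj}+2A_{ij}+B_i+B_j+C$, etc., shows that the quantities $\hat B_i(D)$, $\theta_{ii}(D)$, $\theta_{ij}(D)$ defined in \Copquad{} are precisely the clipped inverse-Gaussian-CDF images of the corresponding frequencies, and hence that $\hat A(D)$ and $\hat B(D)$ are consistent estimators of $A/D$ and $B/D$. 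Reusing the variance computation and the Lipschitz argument from the proof of Theorem~\ref{peredesths}, each frequency has variance $O(1/K)$ and each coefficient is a Lipschitz (clipped inverse-CDF) image of a frequency, so every entry of $\hat A(D)-A/D$ and $\hat B(D)-B/D$ has mean-squared error $O(1/K)$; summing over the $O(d^2)$ entries gives $\E\|\hat A(D)-A/D\|_F^2=O(d^2/K)$ and $\E\|\hat B(D)-B/D\|^2=O(d/K)$.

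Next I would convert these estimation errors into an error on the minimizer. Writing $\tilde A=A/D$, $\tilde B=B/D$, $E_A=\hat A(D)-\tilde A$ and $E_B=\hat B(D)-\tilde B$, the exact minimizer is $x^*=-\tfrac12\tilde A^{-1}\tilde B^{t}$ while the algorithm returns $\hat x=-\tfrac12\hat A(D)^{-1}\hat B(D)^{t}$, so the $D$-factors cancel in the noiseless limit. On the ``good event'' $\|E_A\|_2\le\lambda_{\min}/2$ a Neumann expansion of $(\tilde A+E_A)^{-1}$ gives, to first order,
\[
\hat x-x^*=-\tfrac12\tilde A^{-1}\big(E_B^{t}-E_A\,\tilde A^{-1}\tilde B^{t}\big)+\text{h.o.t.},
\]
where $\|\tilde A^{-1}\|_2=1/\lambda_{\min}$ and $\|\tilde B\|=\|2\tilde A x^*\|\le 2\lambda_{\max}$ since $\|x^*\|\le 1$. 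The higher-order remainder is controlled on the good event and, after taking expectations, is of lower order in $1/K$.

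Finally I would assemble the simple regret. Because $x^*$ minimizes the quadratic, $SR_N=\E[(\hat x-x^*)^{t}A(\hat x-x^*)]=D\,\E[(\hat x-x^*)^{t}\tilde A(\hat x-x^*)]$, and bounding this quadratic form using the perturbation estimate above together with $\E\|E_A\|^2,\E\|E_B\|^2=O(1/K)$ leaves a product of factors $\|\tilde A^{-1}\|=1/\lambda_{\min}$, $\|\tilde B\|=O(\lambda_{\max})$, and the outer curvature $\lambda_{\max}(A)=D\lambda_{\max}$. Tracking these, trading one surplus factor $1/\lambda_{\min}$ for $D$ via the hypothesis $\lambda_{\min}>c/D$, and using $K=\Theta(N/d^2)$ yields the stated $SR_N=O\big(\max((\lambda_{\max}/\lambda_{\min})^2,\lambda_{\max}^2)\,D^2/N\big)$. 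The complement of the good event is handled separately: each \textsc{Cop} output is a bounded average, so $\|E_A\|_2$ concentrates and $\P(\|E_A\|_2>\lambda_{\min}/2)$ is exponentially small in $\lambda_{\min}^2K$, whereas on that event the projected $\hat x$ gives regret at most $O(D\lambda_{\max})$ because the domain $B_d(0,1)$ is bounded; this contribution is $o(1/N)$.

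The hard part is the middle step. Unlike the scalar case of Theorem~\ref{peredesths}, reconstructing $\hat x$ requires inverting a \emph{noisy matrix}, so I must control $\hat A(D)^{-1}$ uniformly---ruling out, or paying only a negligible price for, near-singular realizations of $\hat A(D)$---and must propagate the matrix perturbation through the bilinear map $(\hat A,\hat B)\mapsto\hat A^{-1}\hat B^{t}$ while keeping the dependence on the conditioning $\lambda_{\max}/\lambda_{\min}$ and on $D$ under control. Keeping the quadratic-form structure of the regret (so that the minimizer error is weighted by $A$ rather than crudely amplified by $\|A^{-1}\|^2$) is exactly what determines the power of the condition number in the final rate.
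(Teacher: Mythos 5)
Your overall plan coincides with the paper's proof: the same frequency-to-coefficient machinery (Gaussian CDF inversion, clipping, the Lipschitz argument of Theorem~\ref{peredesths}, per-entry mean squared error $O(1/K)$), then a matrix-inversion step on a good event, and a separate treatment of the bad event using the bounded domain. The peripheral differences are harmless: the paper obtains the bad-event probability $O(1/N)$ simply by Markov's inequality applied to the Step-2 MSE (your exponential concentration is valid but unnecessary, since bad-event regret is bounded and $O(1/N)\times O(1)$ already matches the target rate).

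The gap is in your middle step, and it is quantitative, not cosmetic. Your first-order term is $-\tfrac12\tilde A^{-1}\bigl(E_B^{t}-E_A\tilde A^{-1}\tilde B^{t}\bigr)$, and you propose to control it with the factors $\|\tilde A^{-1}\|_2=1/\lambda_{\min}$ and $\|\tilde B\|\le 2\lambda_{\max}$, then multiply the squared error by the outer curvature $\lambda_{\max}(A)=D\lambda_{\max}$. That bookkeeping gives $\E\|\hx-x^*\|^2=O\bigl(\lambda_{\max}^2/(\lambda_{\min}^4 K)\bigr)$ and hence, even after trading one factor $1/\lambda_{\min}\le D/c$, at best $SR_N=O\bigl((\lambda_{\max}/\lambda_{\min})^3 D^2/N\bigr)$ --- one full condition-number factor worse than the statement. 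The condition number here can be as large as $D/c$ (e.g.\ $A=\mathrm{diag}(c,D)$ satisfies all hypotheses), so this loss cannot be absorbed into constants. The cure is that the error matrix $E_A$ must multiply a \emph{bounded vector}, not be paired with $\tilde A^{-1}\tilde B^{t}$ via a product of norms: since $\tilde A^{-1}\tilde B^{t}=-2x^*$ and $\|x^*\|\le 1-\e$, one has $\|E_A\tilde A^{-1}\tilde B^{t}\|\le 2\|E_A\|$, whence $\E\|\hx-x^*\|^2=O\bigl(1/(\lambda_{\min}^2 K)\bigr)$ and then $SR_N\le D\lambda_{\max}\cdot O\bigl(1/(\lambda_{\min}^2 K)\bigr)=O\bigl((\lambda_{\max}/\lambda_{\min})^2 D^2/N\bigr)$, using $D\lambda_{\max}>c$ and $K=\Theta(N/d^2)$. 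This is exactly what the paper's Step~4 achieves with no Neumann expansion at all: subtracting $2\hx\hA(D)=-\hB(D)^{t}$ from $2x^{*}A/D=-(B/D)^{t}$ makes the error matrix $A'=A/D-\hA(D)$ multiply the projected, hence bounded, $\hx$, so $u=B'^{t}+2\hx A'$ has $\E\|u\|^2=O(1/N)$ directly. Your closing remark about ``keeping the quadratic-form structure'' points to a second valid repair, namely $(\hx-x^*)^{t}A(\hx-x^*)=\tfrac{D}{4}v^{t}\tilde A^{-1}v\le D\|v\|^2/(4\lambda_{\min})$ with $v=E_B^{t}-E_A\tilde A^{-1}\tilde B^{t}$; but that route is incompatible with also multiplying by the curvature $D\lambda_{\max}$, as your assembly paragraph does --- you must pick one. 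Finally, a small omission: your Lipschitz step silently requires $\Delta_{x,y}/(\sqrt2 D)\in[-5,5]$ for all pairs used by \Copquad; this is where the hypotheses $\|A\|_2/D\le1$ and $\|B\|/D\le1$ enter and it should be checked explicitly, as the paper does in its Step~2.
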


{\bf{Remark:}} Please note that $\lambda_{max}\leq 1$ by the assumptions in Theorem \ref{poulala}.

\begin{proof}

Let $x$ and $y$ be two points to be compared in \Copquad: ${(x,y)\in\mathcal{C}:=\{(e_i,-e_i)_i, (0,e_i)_i, (0,e_i+e_j)_{i\neq j}\}}$. We denote by $\Delta_{x,y}$ the value ${\Delta_{x,y}:=\E (\fitness(y)-\fitness(x))=F(y)-F(x)}$ and by $f_{x,y}$ the frequency $f_{x,y}:=\frac{1}{K^2}\sum_{1 \leq i,j \leq K}\chi_{f_x^i<f_y^j}$, where $f_x^i$ and $f_y^j$ are as in Section~\ref{sec:algo}.

{\bf{Step 1: Mean Squared Error of frequencies.}} 


Similarly to step 2 of Theorem \ref{peredesths}, and using the notation ${\Phi(x)=\P(\G(0,1)<x)}$, 
\begin{align}
\E(f_{x,y})&=\Phi\left(\frac{\Delta_{x,y}}{\sqrt{2}D}\right)\nonumber\\
\E\left(f_{x,y}-\Phi\left(\frac{\Delta_{x,y}}{\sqrt{2}D}\right)\right)^2&=Var(f_{x,y})=O(1/N).\label{eq:mse}
\end{align}
%

{\bf Step 2: Mean Squared Error of ${\hat{A}(D)}$ and ${\hat{B}(D)}$.} 

As in Step 3 of the proof of theorem~\ref{peredesths}, we denote by $\Phi_{[\tilde{m},\tilde{M}]}^{-1}: [\tilde{m},\tilde{M}] \rightarrow [-5,5]$ the inverse of $\Phi$ over these intervals: 

\begin{eqnarray*}
 \tilde{h}(x)=\begin{cases}
	\Phi_{[\tilde{m},\tilde{M}]}^{-1}(x)\ &\text{if}\ \tilde{m}\leq x \leq \tilde{M}\\
	-5\ &\text{if}\ x<\tilde{m}\\
     5\ &\text{if}\ \tilde{M}<x\\
     \end{cases}
\end{eqnarray*}

By assumption, $(x,y)\in \mathcal{C}$, $\frac1D\|A\|_{2}\leq 1$ and $\frac1D\|B\|\leq 1$, $\Delta_{x,y}/\sqrt{2} D \in [-5,5]$ and then, as in Step 3 and 4 of Theorem  \ref{peredesths},

\begin{align}
\E\left(\tilde{h}(f_{x,y})-\frac{\Delta_{x,y}}{\sqrt{2}D}\right)^2&\leq \E\left(\tilde{h}(f_{x,y})-\tilde{h}\left(\Phi\left(\frac{\Delta_{x,y}}{\sqrt{2}D}\right)\right)\right)^2\nonumber\\
	&\leq L(\tilde{h})^2\E\left(f_{x,y}-\Phi\left(\frac{\Delta_{x,y}}{\sqrt{2}D}\right)\right)^2\nonumber\\
&=O(1/N)\mbox{ by Eq.~\ref{eq:mse}.}\label{eq:bigo}
\end{align}

By applying Eq.~\ref{eq:bigo}, we then estimate the mean squared error of $\hat{A}(D)$ and $\hat{B}(D)$:
\begin{itemize}
\item ${\hat{B_i}(D)=\sqrt{2}\tilde{h}(f_{-e_i,e_i})/2}$ and ${B_i/D=\Delta_{-e_i,e_i}/2D}$ ${\forall i\in\{1,\dots,d\}}$, then ${\E(\hat{B_i}(D)-B_{i}/D)^{2}=O(1/N)}$ by Eq.~\ref{eq:bigo}, hence ${\E\|\hat{B}(D)-B/D\|^{2}=O(1/N)}$.

\item ${\hat{A}_{i,i}(D)=\sqrt{2}\tilde{h}(f_{0,e_i})-\hat{B}_{i}(D)}$ and ${A_{i,i}/D=\Delta_{0,e_i}/D-B_i/D}$, then $\E(\hat{A}_{i,i}(D)-A_{i,i}/D)^{2}=O(1/N)$ using Eq.~\ref{eq:bigo},
and $$\E(\hat{B_i}(D)-B_{i}/D)^{2}=O(1/N).$$ If $i\neq j$, then $$\hat{A}_{i,j}(D)=\frac{1}{2}\left(\sqrt{2}\tilde{h}(f_{0,e_i+e_j})\right.\ \ \ \ \ \ \ \ \ \ \ \ \ \ $$ $$\left.-\hat{B}_{i}(D)-\hat{A}_{i,i}(D)-\hat{B}_{j}(D)-\hat{A}_{j,j}(D)\right),$$ 
and $$A_{i,j}/D=\ \ \ \ $$
$$1/2\left(\Delta_{0,e_i+e_j}/D-B_i/D-A_{i,i}/D-B_j/D-A_{j,j}/D\right)$$ hence, by proceeding as above, $$\E(\hat{A}_{i,j}(D)-A_{i,j}/D)^{2}=O(1/N)$$ and $$\E\|\hat{A}(D)-A/D\|_{F}^{2}=O(1/N).$$
\end{itemize}
%

{\bf{Step 3: with probability at least $1-O(1/N)$, \Copquad~returns an estimate $\hat{x}$ solution of ${2\hx\hA(D)=-\hB^t(D)}$.}}

By definition of \Copquad~, $2\hx\hA(D)\neq -\hB^t(D)$ only if $\hx$ could not be properly defined because $\hA(D)$ is singular or if we use the projection.

The eigenvalues are continuous (see e.g. \cite{magiceigen}); therefore in a neighborhood of $A/D$, $\hA(D)$ has eigenvalues lower bounded by some $\delta>0$.
Therefore, $\hA(D)$ is singular only out of this neighborhood; this occurs, by {\color{black}{Markov}}'s inequality, with probability $O(1/N)$. Therefore, the first case occurs with probability at most $O(1/N)$.

With probability at least $1-O(1/N)$, the solution $\hx$ of $2\hx\hA(D)=-\hB^t(D)$ is therefore the projection of $-\frac12 \hB(D)^t\hA(D)^{-1}$.
For $\hA(D)$ close enough to $A/D$ and $\hB(D)$ close enough to $B/D$, 
this is close to $x^*$, and therefore it is inside $B_d(0,1-\e)$.

{\bf{Step 4: concluding when $2\hat x\hA(D)=-\hB(D)^t$.}}

Define $B'=B/D-\hB(D)$ and $A'=A/D-\hA(D)$.
We have $2x^*A=-B^t$ and $2\hat x \hA(D)=-\hB(D)^t$.

By substraction, we get 
$$2(\hx\hA(D)-x^*A/D)=(B/D)^t-\hB(D)^t$$
hence $2(\hx A/D-\hx A'-x^*A/D)=B'^t$, using definitions of $A'$ and $B'$.

By step 2, all terms in $A'$ and $B'$ have expected squared norm $O(1/N)$; and by step $3$ $\hx$ is bounded, therefore
$$2(\hx A/D-x^*A/D)=B'^t+2\hx A'$$ has expected squared norm $O(1/N)$,
and $$(\hx-x^*)=\frac12uA^{-1}D$$ with $\E \|u\|^2=O(1/N)$.

With $\lambda_{min}>0$ the smallest eigenvalue of $\frac1D A$, we get
${\E\|\hx-x^*\|^2=O(\lambda_{min}^{-2}/N)}$.

{\color{black} Note that $F$ can be rewritten as 
$${F(x)=(x-x^{*})^t A (x-x^{*}) + C'},$$ where $x^*=-\frac{1}{2}B^t A^{-1}$ and $C'=C-{x^{*}}^{t}Ax^*$.
\begin{align*}
\text{Then } SR_N&=\|F(\hx)-F(x^*)\|^2=\|(\hx-x^{*})^t A (\hx-x^{*})\|^2\\ 
&\leq \lambda_{max}^2\|\hx-x^*\|^2\\ 
\text{Hence  } SR_N&=O\left(\left(\frac{\lambda_{max}}{\lambda_{min}}\right)^2\frac{D^2}{N}\right)\text{, which is the expected }
\end{align*}
}result.

{\bf{Step 5:} General conclusion}

Let us denote by $\mathcal{S}$ the event ``\Copquad~returns an estimate $\hat{x}$ solution of ${2\hx\hA(D)=-\hB(D)^t}$'' and $\bar{\mathcal{S}}$ its complement. In the following, $diam$ denotes the diameter. By definition,
\begin{align*}
SR_{N}&=\E(\fitness(\hx)-\fitness(x^*))\\
&=\underbrace{\E(\fitness(\hx)-\fitness(x^*)|\mathcal{S})}_{=O\left(\left(\frac{\lambda_{max}}{\lambda_{min}}\right)^2\frac{D^2}{N}\right) \text{\bf by step $4$}}\underbrace{\P(\mathcal{S})}_{\leq 1}\\
& +\underbrace{\E(\fitness(\hx)-\fitness(x^*)|\bar{\mathcal{S}})}_{\leq \lambda_{\max}^2\times D^2\times diam (B_d(0,1-\e))}\underbrace{\P(\bar{\mathcal{S}})}_{=O(1/N) \text{\bf by step $3$}}
\end{align*}
%
%
Hence the expected result.
\end{proof}

\section{Experiments}\label{sec:exp}

For each experiment, parameters $A$, $B$ and $C$ satisfying assumptions in Theorem~\ref{poulala} are randomly generated. \Copquad~then returns an approximation of the optimum of the noisy quadratic function ${F(x)=x^{t}Ax+Bx+C+D\G(0,1)}$. Results are obtained over $50$ runs.

{\bf \Copquad~to tackle strong noise.} Fig.~\ref{fig:dim2} presents results of \Copquad~in dimension $2$ when the standard deviation $D$ satisfies the assumptions in Theorem~\ref{poulala}, i.e., $\|B\|/D\leq 1$, $|C|/D\leq 1$ and $\|A\|_{2}/D\leq 1$. The linear rate (in log-log scale) with slope $-1$ is clearly visible. We obtained similar graphs (not presented here) for dimensions $5$.

\begin{figure}
   \centering
    \begin{subfigure}[b]{0.49\textwidth}
        \includegraphics[width=\textwidth]{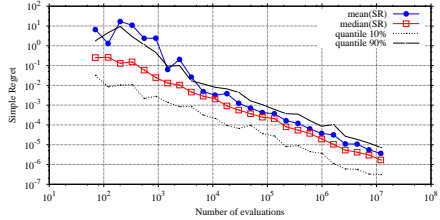}
        \caption{$D=1$}\label{fig:dim2noise1}
    \end{subfigure}
    ~ 
    \begin{subfigure}[b]{0.49\textwidth}
        \includegraphics[width=\textwidth]{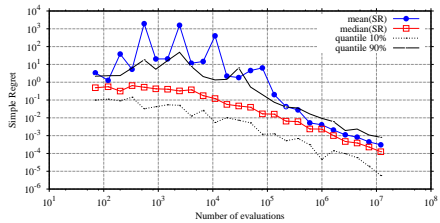}
        \caption{$D=10$}
        \label{fig:dim2noise5}
    \end{subfigure}
    \caption{Dimension $d=2$, over $50$ runs. Mean, median and quantiles $10\%$ and $90\%$ are displayed.}\label{fig:dim2}
\end{figure}

{\bf \Copquad~with small noise.} Figure~\ref{fig:smallnoise} then shows the case of a smaller noise $D$ for dimension $2$. Along with the theory ( $\|A/D\|_2$ does not satisfy the assumptions), we lose the $O(1/N)$ rate. In the early stages, \Copquad~still seems to converge, but it eventually stagnates around the optimum. It is counter-intuitive that an algorithm performs worse when noise decreases; nonetheless, in the case $\frac1D A\rightarrow 0$, the \Cop{}~operator always return $0$ or $1$, thus the estimated parameters are $-5$ or $5$, and the algorithm does not converge. Incidentally, this is consistent with the bandit literature, where the hardest cases are when optimal arms have close values.
Providing an algorithm able to cope with $D\leq\|A\|_{2}$ is possible - asymptotically, as for bandit algorithms mentioned above. Progressively widening the projection interval $[-b(N),b(N)]$ instead of keeping $[-5,5]$ fixed {\color{black}makes this possible; if we have a slow enough function $b:N\mapsto b(N)$ for defining the interval $[-b(N),b(N)]$, then we get:
\begin{itemize}
\item e.g. $\log(\log(\log(N)))$ in Eq. \ref{eq:bigo}, 
\item and asymptotically we still get a probability $1/N$ in Step 3 of Theorem \ref{poulala}.
\end{itemize}

So that, for $N>N_0$, we get Theorem \ref{poulala} (up to the slight increase in the bound, depending on the choice of the $b$ function) independently of $D\leq \|A\|_2$  - but $N_0$ depends on $\frac1DA$.}

\begin{figure}
   \centering
        \includegraphics[width=0.49\textwidth]{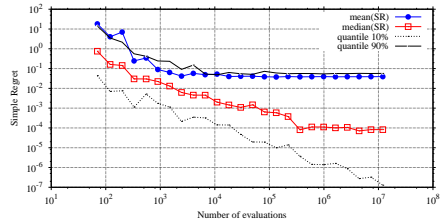}
    \caption{$d=2$, $D=0.65$.}\label{fig:smallnoise}
\end{figure}

\section{Conclusion}\label{sec:conc}

We have shown that comparison-based algorithms can reach a regret $O(1/N)$ on quadratic forms. This partially solves (negatively) a conjecture in \cite{shamir}, and improves results proposed in \cite{decockfoga, rolet2010adaptive}. Our main assumption is the Gaussian nature of the noise. We do not assume that the variance is known, but it is supposed to be constant.

{\bf Future work.} We assume an exactly quadratic function; maybe rates in $O(1/N^{2/3})$ can be reached for non-quadratic functions under smoothness assumptions. Also we might extend the present results to non Gaussian noise.

\bibliographystyle{abbrv}
\bibliography{mca}
\end{document}